\newtheorem{theorem}{Theorem}
\newtheorem{lemma}[theorem]{Lemma}
\newtheorem{proposition}[theorem]{Proposition}
\theoremstyle{definition}
\newtheorem{remark}{Remark}
\newtheorem{example}{Example}
\newcommand{\bfx}{\mathbf{x}}
\newcommand{\bfy}{\mathbf{y}}
\newcommand{\bfz}{\mathbf{z}}
\newcommand{\bff}{\mathbf{f}}
\newcommand{\bfF}{\mathbf{F}}
\numberwithin{equation}{section}
\numberwithin{theorem}{section}
\let\svthefootnote\thefootnote
\begin{document}

\title[Averaging with a time-dependent perturbation]{Averaging with a time-dependent perturbation parameter}
\author{\textsc{David Fajman}}
\author{\textsc{Gernot Hei}\ss\textsc{el}}
\address[\textsc{David Fajman} and \textsc{Gernot Hei}\ss\textsc{el}]{University of Vienna, Department of Physics, Strudlhofgasse 4, 1090 Vienna, Austria}
\email{david.fajman@univie.ac.at}
\email{gernot.heissel@univie.ac.at}
\author{\textsc{Jin Woo Jang}}
\address[\textsc{Jin Woo Jang}]{University of Bonn, Institute for Applied Mathematics, Endenicher Allee 60, Bonn, Germany}
\email{jangjinw@iam.uni-bonn.de}

\begin{abstract}
Motivated by recent problems in mathematical cosmology, in which temporal averaging methods are applied in order to analyse the future asymptotics of models which exhibit oscillatory behaviour, we provide a theorem concerning the large-time behaviour for solutions of a general class of systems. We thus propose our result to be applicable to a wide range of problems in spatially homogenous cosmology with oscillatory behaviour. Mathematically the theorem builds up on the standard theory of averaging in non-linear dynamical systems.
\end{abstract}
 
\let\thefootnote\relax\footnotetext{2010 \textit{Mathematics Subject Classification.} Primary 	34C15, 83F05,  34C29,
81Q05.
	
	\textit{Key words and phrases.} Spatially homogeneous cosmology, Nonlinear oscillations, Klein-Gordon equation, Averaging method, Time-dependent perturbation parameter.}

\thanks{The authors would like to thank Piotr Bizo\'n for discussions. The first and second authors acknowledge support of the Austrian Science Fund (FWF) through the Project \emph{Geometric transport equations and the non-vacuum Einstein flow} (P 29900-N27). The second and third authors gratefully acknowledge the support of the Hausdorff Research Institute for Mathematics
(Bonn), through the Junior Trimester Program on Kinetic Theory. The third author was supported by CRC 1060 \textit{The mathematics of emergent effects} at the
University of Bonn funded through the German Science Foundation (DFG)}

\newcommand{\eqdef }{\overset{\mbox{\tiny{def}}}{=}}

\addtocounter{footnote}{-1}\let\thefootnote\svthefootnote

\thispagestyle{empty}

\maketitle

\section{Introduction}\label{S: intro}

\subsection{Averaging}

The theory of averaging in nonlinear dynamical systems has its origins in the 18th century, when perturbative methods for differential equations became important to test the predictions of Newtons theory of gravitation for celestial mechanics against observational data. The motion of a planet is described by a Keplerian ellipse only in good approximation, since it represents the solution to the two-body problem planet-sun. The actual trajectory of a planet is however also influenced by all other celestial bodies such as their own satelites. The influence of these bodies results in (generally periodic) perturbations from Keplerian motion. The necessity to take into account these effects due to the advances in accuracy of astronomical observations lead to the development of perturbative methods for differential equations, and eventually to the birth of averaging methods in nonlinear dynamical systems; cf.~\cite[\textsc{App}~A]{SandersEtAl2007}.

Nowadays these methods represent widely used tools in science and engineering, backed by an increasingly well developed mathematical theory; cf. e.g.~\cite{SandersEtAl2007}. The core idea is to approximate a perturbed full system by an unperturbed averaged system. The latter is thereby obtained from the former by an integration over a scale associated with the perturbation; e.g. a period in the case of a periodic perturbation. The basic theorems then quantify the accuracy of the averaged approximation in terms of error estimates; cf. e.g.~\cite[p.~31, \textsc{Thm}~2.8.1 or p.~101, \textsc{Thm}~5.5.1]{SandersEtAl2007}. Typically the perturbation is controlled by a constant parameter, and the error estimates together with their range of validity are expressed in terms of orders of it.

With respect to modern gravitational physics, averaging often refers to spatial averaging in inhomogenous cosmology, a subject studied in particular with regards to the backreaction source terms appearing in the averaged equations, which potentially provide a natural explanation for dark energy; cf. e.g. \cite{BuchertEhlers1997, Buchert2008}. Here we however consider \emph{spatially homoegenous} cosmology and \emph{temporal} averaging over oscillatory perturbations. The latter occur for instance when the spacetime is populated by matter which satisfies a wave equation, such as scalar fields (Klein-Gordon equation) or Yang-Mills fields; cf. e.g. \cite{FaimanWyatt2021, FajmanEtAl2020, AlhoUggla2015, AlhoEtAl2015, AlhoEtAl2020}. Yet such oscillatory behaviour can even be present in perfect-fluid cosmologies, as for instance demonstrated by the cases which exhibit a phenomenon called Weyl curvature dominance or asymptotic self-similarity breaking; cf. \cite{Wainwright1999, Wainwright2000, Horwood2003}.

\subsection{Motivation}

A focus point of spatially homogenous cosmology is the investigation of the past and future asymptotics of these systems by analytical means, or, expressed in the language of the analysis of PDEs, their stability; cf. \cite{WainwrightEllis1997, Rendall2008, Coley2003, MR3186493, RyanShepley1975,PhysRevD.85.083511, MR3549249, MR3907044,MR2425407} for general reference and also \cite{AnderssonFajman, MR868737} for the stability of some prominent spacetimes.  In other words, one is interested in how the cosmological models evolved in a distant past (e.g. towards a big-bang singularity) or distant future (e.g. towards a big-crunch singularity or in a forever expanding state).  The present work is motivated by~\cite{FajmanEtAl2020} which offers an analysis of the future asymptotics of LRS Bianchi~III cosmologies with a massive scalar field. Type III thereby refers to the Bianchi type of the spatial homogeneity while LRS (locally rotationally symmetric) refers to an additional spatial rotation symmetry; cf.~\cite{RyanShepley1975, WainwrightEllis1997}. The scalar field $\phi=\phi(t)$ satisfies the Klein-Gordon equation
\begin{align}\label{E: KG}
\ddot\phi+\phi=H(-3\dot\phi) ,
\end{align}
in which the mass is set to $1$ and where a dot denotes differentiation with respect to coordinate time $t$. $H=H(t)$ is the Hubble scalar---a measure of the overall isotropic rate of spatial expansion; cf.~\cite{Rendall2008, WainwrightEllis1997}. Evidently, this cosmology is an example for one of the systems mentioned above, in which oscillations enter via a matter wave equation.

From the point of view of~\eqref{E: KG}, $H$ represents a time-dependent damping of the harmonically oscillating matter field $\phi$. Its time dependence is governed by the coupling to the Einstein equations, which can be expressed as a set of constraints and first-order evolution equations for $H$ and other geometric quantities.\footnote{In the present example of LRS Bianchi type~III the only constraint is the Hamiltonian constraint and the only evolution equation in addition to the Raychaudhuri equation is one for the (Hubble normalised) shear; cf.~\cite{FajmanEtAl2020}. The latter has only one independent component in this case.} From the explicit form of the Raychaudhuri equation (i.e. the evolution equation for $H$), it is easy to show that (for an initially expanding universe) $H$ is strictly decreasing and furthermore
\begin{align}\label{E: H to zero}
\lim_{t\to\infty} H(t) = 0;
\end{align}
cf. \cite[\textsc{Lemma}~3]{FajmanEtAl2020}.\footnote{Another important feature of the problem is, that the Raychaudhuri equation is proportional to $H^2$, while the other evolution equations are proportional to $H$.} This guaranteed smallness of $H$ for large times was motivation in~\cite{FajmanEtAl2020} to associate~\eqref{E: KG} with the simpler problem
\begin{align}\label{E: VdP1}
\ddot\phi+\phi=\epsilon\,(-3\dot\phi)
\end{align}
in which $H(t)$ is replaced by a small perturbation constant $\epsilon>0$. For problems of this kind the theory of (periodic) averaging in non-linear dynamical systems offers powerful theorems; cf.~\cite{SandersEtAl2007}. For~\eqref{E: VdP1} these provide error estimates between $\phi$ and the solution of an associated averaged equation in terms of orders of the perturbation constant~$\epsilon$. In the light of~\eqref{E: H to zero} and the association of~\eqref{E: KG} with~\eqref{E: VdP1} it then seems natural to conjecture that the error between the solution of~\eqref{E: KG} and its associated time averaged equation decays and goes to zero. If this holds, then one can obtain the future asymptotics of the full oscillatory system by analysing the much simpler averaged system, which is free of oscillations.

The above paragraph summarises the core idea entertained in~\cite{FajmanEtAl2020}, in which a respective conjecture is formulated and supported. A closely related approach has also been taken independently in~\cite{AlhoUggla2015, AlhoEtAl2015, AlhoEtAl2020}, where \cite{AlhoEtAl2015, AlhoEtAl2020} even provide theorems concerning the future asymptotics. However, the applicability of these theorems is limited to the concrete cosmological models and matter models discussed in these sources, as is also the conjecture of~\cite{FajmanEtAl2020}. Yet oscillatory problems of this kind are a reoccurring theme in mathematical cosmology, at least (but not only) w.~r.~t. matter fields which satisfy a wave equation. Thus, instead of having to prove a separate theorem for each symmetry class and matter model, one would ideally like to have only a short number of theorems available which are however applicable to a wider range of models. With the present work we aim to make a contribution in this spirit. With \textsc{Theorem}~\ref{theorem local} we provide a rigorous result concerning the large-time behaviour of a wide class of cosmological models abstracted from the system of~\cite{FajmanEtAl2020}, i.e. from the LRS Bianchi~III Einstein-Klein-Gordon system. The statement of the theorem is stronger than that of the theorems of \cite{AlhoEtAl2015, AlhoEtAl2020} and the conjecture of~\cite{FajmanEtAl2020} w. r. t. its generality and range of applicability. It is weaker in that it does not provide the solutions in the limit for $t$ to infinity, but rather gives error estimates for finite time intervals. However these time intervals increase, and the estimated errors decrease, with the time of truncation from the full to the averaged system. Since the latter can be chosen freely, our \textsc{Theorem}~\ref{theorem local} should be a versatile and useful tool to investigate the future asymptotics of cosmologies with oscillatory behaviour by analytical means.

Lastly, we note that while we do not explicitly exclude a cosmological constant for the class of models which satisfy the premises of our theorem, it is likely that the requirement that $H\rightarrow 0$ does not hold in these cases. The importance of this decay of $H$ for controlling the error between the full and the averaged solutions is also why we restrict to the future asymptotics here, since towards the past, $H$ and thus the error is increasing in the motivating model of~\cite{FajmanEtAl2020}.

\subsection{Outline}

The paper is structured as follows. In \textsc{Section}~\ref{S: background} we give some background on periodic averaging in non-linear dynamical systems, and we relate our case to the standard problems of this theory. We then formulate our result in \textsc{Section}~\ref{S: Results}: a theorem on the large-time behaviour of a general class of systems, tailored to problems in spatially homogenous cosmology with oscillatory behaviour. We provide the proof in \textsc{Section}~\ref{S: Proofs} and conclude with a summary in~\textsc{Section}~\ref{S: Summary}.

\section{Background and preparation}\label{S: background}

After a word on notation in \textsc{Section}~\ref{SS: Notation} we give some background on averaging theory. In \textsc{Section}~\ref{SS: VdP} we briefly discuss the Van der Pol equations as prototypes for periodic averaging which are closely associated to the Klein-Gordon equation~\eqref{E: KG} of our motivating problem. We then give the standard form of systems in periodic averaging. After a brief discussion of averaging methods and theorems for these systems in \textsc{Section}~\ref{SS: Methods}, we then make the association with our problems of interest in mathematical cosmology, in which we deal with a strictly decreasing perturbation function rather than a perturbation constant. For these we conjecture the existence of respective averaging statements. To this end, we define the cornerstones for our considered class of systems in \textsc{Section}~\ref{SS: class of systems}. In particular we define a required standard form and asymptotics of the perturbation function for our wide class of systems of consideration, but also mild restrictions on the right hand side functions of the systems.

\subsection{Notation}\label{SS: Notation}
Throughout the paper bold case quantities denote vectors whereas quantities in italics denote scalars. We denote the $0^{th}$ to $i^{th}$-order coefficients and the coefficient of the $i^{th}$-order remainder term in the Taylor expansion of $f(\textbf{x},t,\epsilon)$ near $\epsilon=0$ as $f^0(\textbf{x},t),...,f^i(\textbf{x},t)$ and $f^{[i+1]}(\textbf{x},t,\epsilon)$, respectively, such that
$$
f(\textbf{x},t,\epsilon)=\sum_{j=0}^i f^j(\textbf{x},t)\epsilon^j+f^{[i+1]}(\textbf{x},t,\epsilon)\epsilon^{i+1};
$$
cf. also~\cite[\textsc{p}~13, \textsc{Notation}~1.5.2]{SandersEtAl2007}. The norm $\|\cdot\|$ denotes the standard discrete $\ell^1$ norm $$\|\mathbf{u}\|\eqdef \sum_{i=1}^n|u_i|$$ for $\mathbf{u}\in \mathbb{R}^n$. We also denote as $L^\infty_{\textbf{x},t}$ (or simply $L^\infty$) the standard $L^\infty$ space in both $\textbf{x}$ and $t$ variables with the corresponding norm defined as
$$
\|f\|_{L^\infty_{\textbf{x},t}} \eqdef \sup_{\textbf{x},t}|f(\textbf{x},t)|.
$$

\subsection{Van der Pol equations as prototypes of periodic averaging}\label{SS: VdP}

For our purpose it is instructive to compare~\eqref{E: KG} to a general class of Van der Pol equations
\begin{align}\label{E: VdP2}
\ddot\phi+\phi=\epsilon\,g(\phi,\dot\phi) ,
\end{align}
with a sufficiently smooth function $g$ and where $\epsilon$ is a constant, in contrast to $H=H(t)$ in~\eqref{E: KG}; cf.~\cite[\textsc{Sec}~2.2]{SandersEtAl2007}. For small $\epsilon$, \eqref{E: VdP2} represents a perturbed harmonic oscillator which suggests a parametrisation of the solution by variation of arbitrary constants in terms of an amplitude $r=r(t)$ and phase $\varphi=\varphi(t)$:
\begin{align}\label{E: trafo}
\phi = r \sin(t-\varphi) \qquad\text{and}\qquad
\dot\phi = r \cos(t-\varphi) .
\end{align}
This transforms~\eqref{E: VdP2} into two first-order equations of the form $[\dot r,\dot\varphi]^\mathrm T = \epsilon\,\mathbf f(r,\varphi,t)$, with $\mathbf f$ $2\pi$-periodic in $t$. From~\cite[\textsc{Chap}~2]{SandersEtAl2007} this is a periodic averaging problem of the more general standard form
\begin{align}\label{E: standard form}
\dot{\mathbf x} = \epsilon\,\mathbf f^1(\mathbf x, t) + \epsilon^2\, \mathbf f^{[2]}(\mathbf x, t,\epsilon) , \quad
\mathbf x(0)=\mathbf a ,
\end{align}
with $\mathbf f$ $T$-periodic in $t$.

\subsection{Averaging methods and conjecture}\label{SS: Methods}

For this kind of problem the theory of averaging provides theorems which estimate the error between the solution $\mathbf x(t)$ of~\eqref{E: standard form} and a $\mathbf z(t)$ which solves the corresponding averaged equation
\begin{align}\notag
\dot{\mathbf z} = \epsilon\,\overline{\mathbf f}^1(\mathbf z), \quad
\mathbf z(0)=\mathbf a, \quad\text{where}\quad
\overline{\mathbf f}^1(\mathbf z) := \frac{1}{T}\int_0^T \mathbf f^1(\mathbf z, s)\,\mathbf ds .
\end{align}
For instance \cite[\textsc{p}~31, \textsc{Thm}~2.8.1]{SandersEtAl2007} states that the error between $\mathbf x(t)$ and $\mathbf z(t)$ is $O(\epsilon)$ on timescales of $O(\epsilon^{-1})$; cf. also \textsc{Lemma}~\ref{yzmain} below. Note that both the error estimate and its domain of validity are given in terms of orders of powers of the perturbation constant $\epsilon$.
\begin{remark}
The averaging can be interpreted as a time dependent coordinate
transform, a nonlinear transform for nonlinear problems, and many of the results apply to
deterministic time-dependent perturbations; cf. \cite[Chapter 3]{MR3244318} and \cite[Section 5]{MR2585314}.
\end{remark}

The similarity between~\eqref{E: KG} and~\eqref{E: VdP2} suggests to treat the former as a perturbed harmonic oscillator as well, and to apply averaging in an analogous way. Care has to be taken however since in contrast to $\epsilon$, $H$ is time-dependent and itself subject to an evolution equation of the coupled system. If valid, then a striking feature of such an approach is the possibility to exploit the fact~\eqref{E: H to zero} that $H$ is strictly decreasing and going to zero. In~\eqref{E: KG} $H$ takes the role of the perturbation parameter $\epsilon$ in~\eqref{E: VdP2}, which controls the error and the domain of validity of the error estimate. Hence, with strictly decreasing $H$ the error should decrease as well. One can obtain the information about the large-time behaviour of the more complicated full system via an analysis of the simpler averaged system.

\subsection{Our general class of systems}\label{SS: class of systems}

This ideas just summarised were already entertained in~\cite{FajmanEtAl2020} at the example of LRS Bianchi~III. Likewise~\cite{AlhoEtAl2015, AlhoUggla2015, AlhoEtAl2020} took a closely related approach for the concrete models of these sources. \cite{AlhoEtAl2015, AlhoEtAl2020} thereby give theorems concerning the respective future asymptotics.

The purpose of the present work is to formulate a theorem concerning the large-time behaviour which is applicable to a wide range of problems in spatially homogeneous cosmology. We therefore define our problem as general as possible while also as restrictive as necessary to exploit certain key features.

The systems that we consider in the following have the general form
\begin{equation}\label{ODEsystem}
\begin{bmatrix}
\dot{H}\\ \dot{\bfx}
\end{bmatrix}
=H\bfF^1(\bfx,t)+H^2\bfF^{[2]}(\bfx, t)=H
\begin{bmatrix}
0\\ \bff^1(\bfx,t)
\end{bmatrix}+H^2
\begin{bmatrix}
f^{[2]}(\bfx,t)\\ 0
\end{bmatrix},
\end{equation}
where $H=H(t)$ is positive, is strictly decreasing in $t$, and
\begin{equation}\label{E: H to zero 2}
\lim_{t\rightarrow \infty}H(t)=0.
\end{equation}
\begin{example}\label{Ex: 1}
Our prime example for a system which fits the general form \eqref{ODEsystem} is the LRS Bianchi type~III Einstein-Klein-Gordon system introduced in \cite{FajmanEtAl2020}, which is given by 
\begin{align}
\dot{H}&=H^2[-(1+q)], \label{E: BIII H}\\
\dot{\Sigma}_+&=H[-(2-q)\Sigma_++1-\Sigma^2_+-\Omega], \label{E: BIII S}\\
\ddot{\phi}+\phi&=H[-3\dot{\phi}], \label{E: BIII KG}
\end{align}
with the \textit{deceleration parameter} $$q=2\Sigma^2_++\frac{1}{6H^2}(2\dot{\phi}^2-\phi^2).$$
Here $H$ is the \textit{Hubble scalar}, and we see that in this example the first component of~\eqref{ODEsystem} is the the Raychaudhuri equation~\eqref{E: BIII H}. $\phi$ is the scalar field satisfying the Klein-Gordon equation~\eqref{E: BIII KG}. $\Sigma_+$ is the only independent component of the Hubble normalized shear tensor, and $\Omega$ is the Hubble normalized energy density. The \textit{Hamiltonian constraint} $1-\Sigma_+^2-\Omega>0$ and the \textit{non-vacuum} condition $\Omega>0$ give the uniform bounds of $\Sigma_+\in (-1,1)$ and $\Omega \in (0,1)$. The Klein-Gordon equation~\eqref{E: BIII KG} has the form of a Van der Pol equation. Via an amplitude-phase transformation~\eqref{E: trafo} it can be formulated as the following two first-order equations \cite[Section 2.2]{FajmanEtAl2020}:
\begin{equation}\label{KG eq}
\begin{split}
\dot{\Omega}&=H[2\Omega(1+q-3\cos(t-\varphi)^2],\\
\dot{\varphi}&=H[-3\sin(t-\varphi)\cos(t-\varphi)],
\end{split}
\end{equation}which also implies $$q=2\Sigma_+^2+\Omega(3\cos(t-\varphi)^2-1).$$  Then $H$ and $\bfx \eqdef (\Sigma_+, \Omega, \varphi)^\top$ satisfy the system \eqref{ODEsystem}; i.e., the components of $\dot{\mathbf x}$ stand for both the remaining Einstein evolution equation \eqref{E: BIII S} and the two first-order Klein-Gordon equations \eqref{KG eq}. Note that $\bff^1$ and $f^{[2]}$ are smooth with respect to $\bfx$. Also, both $\bff^1$ and $f^{[2]}$ are in $L^\infty_{\bfx, t}.$
\end{example}
\begin{example}\label{Ex: 2}
In addition to LRS Bianchi~III also the Einstein Klein-Gordon systems of the spatially homogenous, LRS cosmologies of Bianchi types~I, II, VIII and IX as well as Kantowski-Sachs can be brought into the form~\eqref{ODEsystem}. The reader can verify that by starting from the formulation of~\cite{RendallUggla2000}, however using a Klein-Gordon field as matter model, and then performing the usual amplitude-phase transformation~\eqref{E: trafo}. For type~IX it is however expected that the condition~\eqref{E: H to zero 2} fails, since it is a closed cosmology, and as such it tends to re-collapse; cf.~\cite{WainwrightEllis1997}.
\end{example}
\begin{example}\label{Ex: 3}
The systems treated in~\cite{AlhoUggla2015, AlhoEtAl2015, AlhoEtAl2020} are also examples of the form~\eqref{ODEsystem}. In \cite{AlhoEtAl2020} the matter source is a Yang-Mills field, which demonstrates that averaging techniques in mathematical cosmology are by no means limited to Einstein-Klein-Gordon cosmology.
\end{example}

\section{Results}\label{S: Results}

We are now ready to formulate our theorem.

\subsection{Large-time behaviour}\label{SS: local-in-time}

Consider the system~\eqref{ODEsystem} with positive  and strictly decreasing $H=H(t)$, and with $\lim_{t\rightarrow \infty}H(t)=0$. We then obtain the following large-time behaviour on the system \eqref{ODEsystem}:
\begin{theorem}[Large-time behaviour]\label{theorem local}Suppose that $H=H(t)>0$ is strictly decreasing in $t$, and $$\lim_{t\rightarrow \infty}H(t)=0.$$ Fix any $\varepsilon>0$ with $\varepsilon<H(0)$ and define $t_*>0$ such that $\varepsilon=H(t_*)$.   Suppose that 
$$\|\bff^1\|_{L^\infty_{\bfx,t}},\ \|f^{[2]}\|_{L^\infty_{\bfx,t}} <\infty,$$ and that $\bff^1(\bfx,t)$ is Lipschitz continuous and $f^{[2]}$ is continuous with respect to $\bfx$ for all $t\ge t_*$. Also, assume that $\bff^1$ and $f^{[2]}$ are $T$-periodic for some $T>0$. Then for all $t>t_*$ with $t=t_*+O(H(t_*)^{-\gamma})$ for any given $\gamma\in(0,1)$, we have
\begin{equation}\label{local statement}\bfx(t)-\bfz(t)=O(H(t_*)^{\min\{1,2-2\gamma\}}),\end{equation} where $\bfx$ is the solution of the system \eqref{ODEsystem} with the initial condition $\bfx(0)=\bfx_0$ and
$\bfz(t)$ is the solution of the averaged equation $$\dot{\bfz}=H(t_*)\bar{\bff}^1(\bfz), \text{ for } t>t_*$$ with the initial condition $\bfz(t_*)=\bfx(t_*)$ where the average $\bar{\bff}^1$ is defined as
$$\bar{\bff}^1(\bfz )=\frac{1}{T}\int_{t_*}^{t_*+T} \bff^1(\bfz,s)ds.$$
\end{theorem}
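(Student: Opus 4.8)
The plan is to compare $\bfx$ with $\bfz$ through an intermediate trajectory in which the perturbation parameter is frozen at its value at the truncation time $t_*$. Concretely, I would introduce $\bfy=\bfy(t)$ as the solution of the \emph{non-averaged} system with constant perturbation parameter $\varepsilon=H(t_*)$, namely
\begin{equation*}
\dot{\bfy}=\varepsilon\,\bff^1(\bfy,t),\qquad \bfy(t_*)=\bfx(t_*),
\end{equation*}
so that the triangle inequality splits the total error as $\|\bfx-\bfz\|\le\|\bfx-\bfy\|+\|\bfy-\bfz\|$. The pair $(\bfy,\bfz)$ is then precisely a standard periodic-averaging problem with the \emph{constant} rate $\varepsilon$, while the pair $(\bfx,\bfy)$ isolates the single remaining effect of the time dependence of $H$.

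For the averaging term I would invoke Lemma \ref{yzmain}: since $\bff^1$ is bounded, Lipschitz in $\bfx$ and $T$-periodic, and since the equation for $\bfz$ uses exactly $\bar\bff^1$ with the constant rate $\varepsilon$, that lemma yields $\|\bfy(t)-\bfz(t)\|=O(\varepsilon)=O(H(t_*))$ on the whole window $t-t_*=O(\varepsilon^{-\gamma})$, which for $\gamma\in(0,1)$ lies inside the $O(\varepsilon^{-1})$ range of validity of the classical estimate.

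The heart of the argument is the bound on $\|\bfx-\bfy\|$, and this is where the first component of \eqref{ODEsystem}, $\dot H=H^2 f^{[2]}$, enters decisively. Subtracting the two equations gives
\begin{equation*}
\frac{d}{dt}(\bfx-\bfy)=H(t)\bigl[\bff^1(\bfx,t)-\bff^1(\bfy,t)\bigr]+\bigl(H(t)-\varepsilon\bigr)\bff^1(\bfy,t).
\end{equation*}
Since $H$ is decreasing, $H(t)\le\varepsilon$ for $t\ge t_*$, so the first bracket is controlled by $\varepsilon L\|\bfx-\bfy\|$ through the Lipschitz constant $L$ of $\bff^1$. For the inhomogeneous term I would estimate the drift of $H$ by $|\dot H|=H^2|f^{[2]}|\le\varepsilon^2\|f^{[2]}\|_{L^\infty}$, which integrates to $|H(t)-\varepsilon|\le\varepsilon^2\|f^{[2]}\|_{L^\infty}(t-t_*)$. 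A Gronwall argument, after bounding the exponential factor by $\exp(\varepsilon L\,(t-t_*))=\exp(O(\varepsilon^{1-\gamma}))=O(1)$, then produces
\begin{equation*}
\|\bfx(t)-\bfy(t)\|\lesssim \varepsilon^2(t-t_*)^2=O\bigl(\varepsilon^{2-2\gamma}\bigr)=O\bigl(H(t_*)^{2-2\gamma}\bigr).
\end{equation*}
Combining the two pieces gives $O(\varepsilon)+O(\varepsilon^{2-2\gamma})=O(H(t_*)^{\min\{1,2-2\gamma\}})$, which is exactly \eqref{local statement}.

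I expect the main obstacle to be making the freezing of $H$ rigorous over the stated window: one must verify that the Gronwall exponent remains bounded, which is precisely why the window length is capped at $O(\varepsilon^{-\gamma})$ with $\gamma<1$ rather than the full $O(\varepsilon^{-1})$, and one must confirm that the quadratic-in-time growth of the drift term does not overwhelm the estimate. The exponent $\min\{1,2-2\gamma\}$ then transparently encodes the competition between the genuine $O(\varepsilon)$ averaging error and the $O(\varepsilon^{2-2\gamma})$ error incurred by treating the slowly varying $H$ as constant, the two contributions balancing exactly at $\gamma=\tfrac12$.
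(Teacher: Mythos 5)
Your proposal is correct and follows essentially the same route as the paper: the same intermediate trajectory $\bfy$ with $H$ frozen at $H(t_*)$, the same triangle-inequality split, the standard averaging lemma (Theorem 2.8.1 of \cite{SandersEtAl2007}) for $\|\bfy-\bfz\|=O(\varepsilon)$, and a Gr\"onwall argument yielding $\|\bfx-\bfy\|=O(\varepsilon^{2-2\gamma})$ via the relation $\dot H=H^2f^{[2]}$. The only cosmetic difference is that you bound $H(t)-\varepsilon$ by integrating $\dot H$ directly, whereas the paper uses the mean value theorem; both give the identical quadratic-in-time drift bound.
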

\begin{proof}
Cf. sections~\ref{S: firstorder} and~\ref{S: periodic averaging}.
\end{proof}
\begin{remark}
Example~\ref{Ex: 1} of \textsc{Section}~\ref{SS: class of systems} satisfies the assumptions of \textsc{Theorem}~\ref{theorem local}. We expect the same to hold for examples~\ref{Ex: 2} and~\ref{Ex: 3}, with the exception of LRS Bianchi~IX which as a closed cosmology is expected to re-collapse and thus to fail the assumption~\eqref{E: H to zero 2}; cf.~\cite{WainwrightEllis1997}.
\end{remark}
\begin{remark}We emphasise the following key properties of~\textsc{Theorem}~\ref{theorem local}, which are connected to the choice of truncation time $t_*$ to the first order system:
\begin{itemize}
\item The longer one waits with truncation, the better the estimate gets.
\item Furthermore, the longer one waits with truncation, for the longer is the estimate valid.
\end{itemize}
These are attractive properties for the analysis of the large-time behaviour of such systems. To the best of our knowledge, \textsc{Theorem}~\ref{theorem local} is the first of its kind, which gives the large-time behaviour for a general class of systems, with a time dependent perturbation parameter, and which naturally appear in spatially homogenous cosmology with oscillating matter models.
\end{remark}


\section{Proofs}\label{S: Proofs}
The proof of \textsc{Theorem}~\ref{theorem local} involves the first-order approximation of the system, the error estimates, and the use of periodic averaging. In \textsc{Section}~\ref{S: firstorder}, we introduce the first-order approximation of our system and prove the main ingredient of our theorem---\textsc{Proposition}~\ref{xymain}. In \textsc{Section}~\ref{S: periodic averaging} we apply a standard theorem of periodic averaging which closes the proof. 

\subsection{First-order approximation and error estimates}\label{S: firstorder}
In order to approximate the solution $\bfx(t)$ of the system \eqref{ODEsystem}, we consider the following truncated first-order system; for $t>t_*$ with some given time $t_*$, we consider
\begin{equation}\label{firstordersystem}
\begin{bmatrix}
\dot{\mathcal{H}}\\ \dot{\bfy}
\end{bmatrix}
=\mathcal{H}(t)
\begin{bmatrix}
0\\ \bff^1(\bfy,t)
\end{bmatrix},
\end{equation}
where $\mathcal{H}(t_*)=H(t_*)$ and $\bfy(t_*)=\bfx(t_*)$; i.e., the initial data coincide. Then since $\dot{\mathcal{H}}=0,$ we have 
$\mathcal{H}(t)\equiv H(t_*).$ We hereby denote $H_*\eqdef H(t_*).$
Then we introduce the following proposition on the error estimates for the first-order approximation:
\begin{proposition}\label{xymain}Fix any $t_*>0$.
Suppose that $\bfx$ and $\bfy$ are the solutions of \eqref{ODEsystem} and \eqref{firstordersystem}, respectively, with the initial conditions $\bfx(0)=\bfx_0$ and $\bfy(t_*)=\bfx(t_*)$.
Suppose further that $H=H(t)>0$ is strictly decreasing in $t$, and $$\lim_{t\rightarrow \infty}H(t)=0.$$ In addition, suppose that 
$$\|\bff^1\|_{L^\infty_{\bfx,t}},\ \|f^{[2]}\|_{L^\infty_{\bfx,t}} <\infty,$$ and that $\bff^1(\bfx,t)$ is Lipschitz continuous in $\bfx$ for all $t\ge t_*$. Then for all $t>t_*$ with $t=t_*+O(H(t_*)^{-\gamma})$ for some $\gamma\in(0,1)$, we have
$$\bfx(t)-\bfy(t)=O(H(t_*)^{2-2\gamma}).$$ 
\end{proposition}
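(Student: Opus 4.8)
The plan is to estimate the difference $\bfx(t)-\bfy(t)$ via a Grönwall-type argument, treating the two systems \eqref{ODEsystem} and \eqref{firstordersystem} as perturbations of one another. The two sources of discrepancy between $\bfx$ and $\bfy$ are: first, the $H^2 f^{[2]}$ term present in the full system \eqref{ODEsystem} but absent in the first-order system \eqref{firstordersystem}; and second, the replacement of the genuinely time-dependent $H(t)$ by the frozen constant $H_*=H(t_*)$. The key quantitative input is that both of these discrepancies are controlled by $H_*$, because $H$ is strictly decreasing with $\lim_{t\to\infty}H(t)=0$, so that for all $t\ge t_*$ we have $0<H(t)\le H_*$.

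First I would write the integral form of both equations on $[t_*,t]$. Since $\dot{\bfy}=H_*\,\bff^1(\bfy,t)$ and the first component of $\dot{\bfx}$ is $\dot{H}=H^2 f^{[2]}(\bfx,t)$ with the remaining components $\dot{\bfx}=H\,\bff^1(\bfx,t)$, subtracting gives
\begin{equation}\label{E: split}
\bfx(t)-\bfy(t)=\int_{t_*}^t\Bigl(H(s)\bff^1(\bfx(s),s)-H_*\bff^1(\bfy(s),s)\Bigr)\,ds .
\end{equation}
I would then split the integrand by adding and subtracting the cross term $H_*\bff^1(\bfx(s),s)$, yielding one piece $H_*\bigl(\bff^1(\bfx(s),s)-\bff^1(\bfy(s),s)\bigr)$ controlled by the Lipschitz constant $L$ of $\bff^1$, and a second piece $\bigl(H(s)-H_*\bigr)\bff^1(\bfx(s),s)$ controlled by $\|\bff^1\|_{L^\infty}$ together with the size of $H_*-H(s)$. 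For the $H$-component one argues separately: $\dot H=H^2 f^{[2]}$ with $\|f^{[2]}\|_{L^\infty}<\infty$ and $H\le H_*$ forces $|\dot H|\le H_*^2\|f^{[2]}\|_{L^\infty}$, so integrating over an interval of length $O(H_*^{-\gamma})$ shows $H_*-H(s)=O(H_*^{2-\gamma})$ on this range. Feeding this into the second piece, its contribution over the interval is $O(H_*^{2-\gamma})\cdot O(H_*^{-\gamma})\cdot\|\bff^1\|_{L^\infty}=O(H_*^{2-2\gamma})$, which is exactly the target order.

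Setting $E(t)\eqdef\|\bfx(t)-\bfy(t)\|$ and applying the triangle inequality to \eqref{E: split} then gives an inequality of the form
\begin{equation}\label{E: gronwall}
E(t)\le C\,H_*^{2-2\gamma}+L\,H_*\int_{t_*}^t E(s)\,ds ,
\end{equation}
to which Grönwall's lemma applies, yielding $E(t)\le C\,H_*^{2-2\gamma}\exp\bigl(L\,H_*\,(t-t_*)\bigr)$. Since $t-t_*=O(H_*^{-\gamma})$ with $\gamma<1$, the exponent $L\,H_*(t-t_*)=O(H_*^{1-\gamma})\to 0$ as $H_*\to 0$, so the exponential factor is bounded by a constant and we conclude $E(t)=O(H_*^{2-2\gamma})$.

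I expect the main obstacle to be the careful bookkeeping of the $H$-component, namely establishing the bound $H_*-H(s)=O(H_*^{2-\gamma})$ uniformly for $s$ in the interval $[t_*,t_*+O(H_*^{-\gamma})]$ and verifying that the resulting error propagates at the claimed order rather than a worse one. This requires using the quadratic nature of the Raychaudhuri-type equation $\dot H=H^2 f^{[2]}$ (the extra power of $H$ compared with the $\bff^1$ terms, highlighted as a key feature in the motivating footnote) together with the monotonicity of $H$, and checking that the subexponential growth in the Grönwall estimate does not degrade the exponent, which is precisely where the restriction $\gamma<1$ is used.
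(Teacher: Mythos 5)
Your proposal is correct and follows essentially the same route as the paper's proof: both write the difference in integral form, add and subtract the cross term $H_*\bff^1(\bfx,\tau)$ to split it into a Lipschitz piece and an $(H(\tau)-H_*)$ piece, control the latter via the quadratic equation $\dot H = H^2 f^{[2]}$ together with monotonicity of $H$ (the paper via the mean value theorem, you via directly integrating the bound $|\dot H|\le H_*^2\|f^{[2]}\|_{L^\infty}$ — an interchangeable step yielding the same $O((t-t_*)^2 H_*^2)$ bound), and then conclude by Gr\"onwall, using $\gamma<1$ to keep the exponential factor $\exp(O(H_*^{1-\gamma}))$ bounded.
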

\begin{proof}
Since $\dot{\mathcal{H}}=0$ in \eqref{firstordersystem}, we have $\mathcal{H}(t)=H_*,$ for all $t\ge t_*.$ Now, we subtract the equation for $\bfy$ in \eqref{firstordersystem} from that for $\bfx$ in \eqref{ODEsystem} to obtain that
$$\dot{\bfx}-\dot{\bfy} = H(t)\bff^1(\bfx,t)-H_* \bff^1(\bfy,t).$$ By integrating this identity, we have
\begin{equation}\label{eq 1} \bfx(t)-\bfy(t) = \int_{t_*}^t \left(H(\tau)\bff^1(\bfx,\tau)-H_* \bff^1(\bfy,\tau)\right)d\tau,\end{equation} by the initial condition $\bfy(t_*)=\bfx(t_*).$
Now we consider the mean-value theorem for $H(\tau)$ and observe that, for some $t_c=t_c(\tau) \in (t_*,\tau)$, we have 
$$H(\tau)-H_*=\dot{H}(t_c) (\tau-t_*),\ \text{ for } \tau \in (t_*,t).$$
Then \eqref{eq 1} implies that
\begin{multline*}\bfx(t)-\bfy(t) = \int_{t_*}^t \left((H_*+\dot{H}(t_c) (\tau-t_*))\bff^1(\bfx,\tau)-H_* \bff^1(\bfy,\tau)\right)d\tau\\
= H_*\int_{t_*}^t \left(\bff^1(\bfx,\tau)-\bff^1(\bfy,\tau)\right)d\tau
+\int_{t_*}^t \dot{H}(t_c) (\tau-t_*)\bff^1(\bfx,\tau)d\tau\\
=H_*\int_{t_*}^t \left(\bff^1(\bfx,\tau)-\bff^1(\bfy,\tau)\right)d\tau
+\int_{t_*}^t H^2(t_c) f^{[2]}(\bfx,t_c) (\tau-t_*)\bff^1(\bfx,\tau)d\tau,
\end{multline*}
where the last line is by the first equation on $\dot{H}$ in the system \eqref{ODEsystem}.
Therefore, we have
\begin{multline*}
\|\bfx(t)-\bfy(t)\| \\
\le H_*\int_{t_*}^t \left\|\bff^1(\bfx,\tau)-\bff^1(\bfy,\tau)\right\|d\tau
+\int_{t_*}^t H^2(t_c) |f^{[2]}(\bfx,t_c) | (\tau-t_*) \|\bff^1(\bfx,\tau)\|d\tau\\
\le H_*\int_{t_*}^t \left\|\bff^1(\bfx,\tau)-\bff^1(\bfy,\tau)\right\|d\tau
+\frac{(t-t_*)^2}{2}H_*^2\|f^{[2]}\|_{L^\infty}\|\bff^1\|_{L^\infty}\\
\le H_* c_L\int_{t_*}^t \left\|\bfx(\tau)-\bfy(\tau)\right\|d\tau
+\frac{(t-t_*)^2}{2}H_*^2\|f^{[2]}\|_{L^\infty}\|\bff^1\|_{L^\infty},
\end{multline*} because $H(\cdot)$ is decreasing, 
where $c_L$ is the Lipschitz constant and the third inequality is by the Lipschitz continuity of $\bff^1.$ Now we apply the Gr\"onwall inequality and obtain 
$$\|\bfx(t)-\bfy(t)\| \le \frac{(t-t_*)^2}{2}H_*^2\|f^{[2]}\|_{L^\infty}\|\bff^1\|_{L^\infty}\exp\left(H_* c_L(t-t_*)\right).$$
Therefore, for the time-scale $t=t_*+O(H_*^{-\gamma})$ for any $\gamma \in(0,1)$ we have
$$\|\bfx(t)-\bfy(t)\| \le O(H_*^{2-2\gamma})\exp(O(H_*^{1-\gamma}))=O(H_*^{2-2\gamma}),$$ as long as $H_*$ is finite. This completes the proof.
\end{proof}

\subsection{Periodic averaging}\label{S: periodic averaging}

Equipped with the first-order approximation and the error estimates, the next ingredient is a result from periodic averaging theory.

First we remind ourselves of the periodic averaging problem in standard form~\eqref{E: standard form} which we discussed in \textsc{Section}~\ref{SS: VdP}. For convenience we rewrite it here for the special case $\mathbf f^{[2]}=0$ as
\begin{equation}\label{yeq}
\dot{\bfy}=\varepsilon\bff^1(\bfy,t),\ \bfy(0)=\bfy_0,\quad \text{for } t>0
\end{equation}
with $\bfy, \ \bff^1(\bfy,t)\in \mathbb{R}^n,$ where $\bff^1$ is $T$-periodic and $\varepsilon$ is a small parameter.
Now we note that the equation for $\bfy$ of our first order system~\eqref{firstordersystem} is of this form with $\varepsilon=H_*$.
Then we fix a sufficiently large time $t_*>0$, and consider the first-order associated averaged system
\begin{equation}\label{zeq}\dot{\bfz}=\varepsilon\bar{\bff}^1(\bfz), \text{ for } t>t_*\end{equation} with the initial condition $\bfz(t_*)=\bfy(t_*)$ where $\varepsilon=H(t_*)$ and the average $\bar{\bff}^1$ is defined as
$$\bar{\bff}^1(\bfz )=\frac{1}{T}\int_{t_*}^{t_*+T} \bff^1(\bfz,s)ds.$$
Let $D\subset \mathbb{R}^n$ be a connected, bounded open set (with compact closure) containing the initial value $\bfy(t_*)=\bfz(t_*)$. Fix $L>0$ and $\varepsilon_0>0$ such that the solutions $\bfy(t,\varepsilon)$ and $\bfz(t,\varepsilon)$ with $0\le \varepsilon\le \varepsilon_0$ remain in $D$ for $t_*\le t\le t_*+L/\varepsilon.$ Then there is a basic result on the asymptotic error estimates:
\begin{lemma}[Theorem 2.8.1 of \cite{SandersEtAl2007}]\label{yzmain}
Let $\bff^1$ be Lipschitz continuous.  Also, assume that $\bff^1$ is $T$-periodic for some $T>0$. Let $\varepsilon_0$, $D$, $L$ be as above. Then there exists a constant $c>0$ such that the solutions $\bfy$ and $\bfz$ of \eqref{yeq} and \eqref{zeq} satisfy
$$\|\bfy(t,\varepsilon)-\bfz(t,\varepsilon)\|<c\varepsilon,$$for $0\le \varepsilon\le \varepsilon_0$ and $t_*\le t \le t_*+\frac{L}{\varepsilon},$ where $\|\cdot\|$ denotes the norm $$\|\mathbf{u}\|\eqdef \sum_{i=1}^n|u_i|$$ for $\mathbf{u}\in \mathbb{R}^n.$
\end{lemma}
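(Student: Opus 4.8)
Since the averaged equation~\eqref{zeq} is obtained from~\eqref{yeq} by replacing $\bff^1$ with its period-mean $\bar{\bff}^1$, the natural plan is to write the difference $\bfy-\bfz$ in integral form, separate the genuinely oscillatory part of $\bff^1-\bar{\bff}^1$ from its slowly varying part, and close the estimate by Gr\"onwall. This is the classical periodic-averaging argument behind Theorem~2.8.1 of~\cite{SandersEtAl2007}; I would give a self-contained version using only the stated hypotheses (Lipschitz continuity, $T$-periodicity, and the $L^\infty$ bounds), in the same spirit as the proof of \textsc{Proposition}~\ref{xymain}.

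\textbf{Integral form and splitting.} Using the common initial data $\bfy(t_*)=\bfz(t_*)$, subtracting~\eqref{zeq} from~\eqref{yeq} and integrating gives
$$\bfy(t)-\bfz(t)=\varepsilon\int_{t_*}^t\bigl(\bff^1(\bfy(s),s)-\bar{\bff}^1(\bfz(s))\bigr)\,ds.$$
I would split the integrand as $\bigl(\bff^1(\bfy,s)-\bff^1(\bfz,s)\bigr)+\bigl(\bff^1(\bfz,s)-\bar{\bff}^1(\bfz)\bigr)$. The first piece is controlled by the Lipschitz continuity of $\bff^1$, contributing $\varepsilon c_L\int_{t_*}^t\|\bfy-\bfz\|\,ds$, which will feed into Gr\"onwall. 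The whole difficulty is therefore concentrated in the second, purely oscillatory piece $\mathbf{G}(t)\eqdef\int_{t_*}^t\bigl(\bff^1(\bfz(s),s)-\bar{\bff}^1(\bfz(s))\bigr)\,ds$, which a priori could grow linearly over the long interval of length $O(\varepsilon^{-1})$.

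\textbf{Key estimate.} The crux is to show $\|\mathbf{G}(t)\|=O(1)$ uniformly on $t_*\le t\le t_*+L/\varepsilon$, so that $\varepsilon\mathbf{G}=O(\varepsilon)$. I would partition $[t_*,t]$ into $N=\lfloor(t-t_*)/T\rfloor=O(\varepsilon^{-1})$ full periods plus a leftover shorter than $T$. On each period $I_k=[t_*+kT,\,t_*+(k+1)T]$ I freeze the slow variable at $\bfz_k\eqdef\bfz(t_*+kT)$; by $T$-periodicity of $\bff^1(\cdot,s)$ and the very definition of $\bar{\bff}^1$, the frozen integral $\int_{I_k}\bigl(\bff^1(\bfz_k,s)-\bar{\bff}^1(\bfz_k)\bigr)\,ds$ vanishes \emph{exactly}. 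The error from freezing is bounded, via the Lipschitz constants of $\bff^1$ and $\bar{\bff}^1$ and the slow-drift bound $\|\bfz(s)-\bfz_k\|\le\varepsilon\|\bar{\bff}^1\|_{L^\infty}T$ on $I_k$, by $O(\varepsilon)$ per period. Summing these $O(\varepsilon)$ contributions over the $N=O(\varepsilon^{-1})$ periods gives $O(1)$, and the leftover partial period contributes at most $2T\|\bff^1\|_{L^\infty}=O(1)$; hence $\|\mathbf{G}\|=O(1)$.

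\textbf{Closing and main obstacle.} Combining the two pieces yields $\|\bfy(t)-\bfz(t)\|\le \varepsilon c_L\int_{t_*}^t\|\bfy-\bfz\|\,ds+O(\varepsilon)$, and Gr\"onwall gives $\|\bfy-\bfz\|\le O(\varepsilon)\,e^{\varepsilon c_L(t-t_*)}$. On the averaging timescale $t-t_*\le L/\varepsilon$ the exponent is at most $c_LL=O(1)$, so $\|\bfy-\bfz\|\le c\varepsilon$, which is the claim. The main obstacle is precisely the $O(1)$ control of $\mathbf{G}$: it is the exact per-period cancellation coming from $\bar{\bff}^1$ being the mean that prevents linear growth, and making its proof honest requires both the slow drift $\|\bfz(s)-\bfz_k\|=O(\varepsilon)$ and the confinement of $\bfy,\bfz$ to the bounded set $D$ (so that the Lipschitz and sup bounds apply throughout), which is exactly what the hypotheses on $D$, $L$, $\varepsilon_0$ guarantee. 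Equivalently, this step is the integrated form of the near-identity averaging transformation $\bfy=\mathbf{w}+\varepsilon\mathbf{u}(\mathbf{w},t)$ with $\mathbf{u}(\mathbf{w},t)=\int_{t_*}^t\bigl(\bff^1(\mathbf{w},s)-\bar{\bff}^1(\mathbf{w})\bigr)\,ds$, whose boundedness is the structural reason averaging works.
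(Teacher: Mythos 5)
Your proposal is correct, but note that the paper itself offers no proof of this lemma at all: it is imported verbatim as Theorem~2.8.1 of \cite{SandersEtAl2007}, and the paper's own work consists only of combining it with \textsc{Proposition}~\ref{xymain} by the triangle inequality. What you have written is therefore a self-contained reconstruction of the cited classical result, and it is sound: the splitting of $\bff^1(\bfy,s)-\bar{\bff}^1(\bfz(s))$ into a Lipschitz difference plus the oscillatory integral $\mathbf{G}$, the exact per-period cancellation of the frozen integrand (which holds precisely because $\bff^1$ is $T$-periodic and $\bar{\bff}^1$ is its period mean), the $O(\varepsilon)$ freezing error per period summed over $O(\varepsilon^{-1})$ periods, and the final Gr\"onwall step on the timescale $L/\varepsilon$ all fit together correctly. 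This is the classical Besjes-type argument; the other standard route, which you mention at the end, is the near-identity transformation $\bfy=\mathbf{w}+\varepsilon\mathbf{u}(\mathbf{w},t)$ with $\mathbf{u}$ the bounded zero-mean primitive of $\bff^1-\bar{\bff}^1$. The comparison is worth making: the transformation proof extends naturally to higher-order averaging and to constructing periodic correction terms, whereas your splitting argument is more elementary and uses exactly the hypotheses stated in the lemma (Lipschitz continuity, periodicity, sup bounds, confinement to $D$), with no need to construct or invert a change of variables. Two small points would make the write-up fully rigorous, both immediate: in the Gr\"onwall step the inhomogeneous term should be taken as $\varepsilon\sup_{t_*\le t'\le t_*+L/\varepsilon}\|\mathbf{G}(t')\|$, a constant multiple of $\varepsilon$; and the Lipschitz constant of $\bar{\bff}^1$ invoked in the freezing estimate should be observed to be inherited from that of $\bff^1$ by averaging over one period.
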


Then Proposition \ref{xymain} and Lemma \ref{yzmain} together imply \textsc{Theorem}~\ref{theorem local} by letting $\varepsilon=H(t_*)$, using the triangle inequality and noting that the time scale $O(H(t_*)^{-\gamma})$ is smaller than $O(H(t_*)^{-1})$ for a sufficiently large time $t_*$ such that $H(t_*)=\varepsilon\ll1.$ 
This completes the proof of \textsc{Theorem}~\ref{theorem local}.

\section{Summary}\label{S: Summary}

Motivated by problems in spatially homogenous cosmology in \cite{FajmanEtAl2020, AlhoEtAl2015, AlhoUggla2015, AlhoEtAl2020} in which oscillations enter the system via a matter wave equation, we present a theorem regarding the large-time behaviour of such systems. The key idea is thereby to view these as perturbation problems, in which the Hubble scalar plays the role of the perturbation function---in analogy to the perturbation constant in standard problems of averaging in non-linear dynamical systems.

In standard averaging the perturbation constant estimates the error between the solutions of the full and a simpler time-averaged system. We show that for large times this is also the case for our class of systems and the Hubble scalar. Furthermore, we show that the estimated error decreases, and the time of validity of the estimate increases, with the time of truncation from the full to the averaged system, which can be chosen freely.

While related theorems have been given for the concrete examples discussed in~\cite{AlhoEtAl2015, AlhoEtAl2020}, our theorem is formulated in a very general fashion, with only mild restrictions on the systems of consideration. Since systems of this kind appear to be re-occurring in spatially homogenous cosmology with oscillatory behaviour (cf. \cite{FajmanEtAl2020, AlhoEtAl2015, AlhoUggla2015, AlhoEtAl2020}) it is desirable to craft a toolkit of theorems which is applicable to a wider class of models. To this end we offer our theorem, which to our best knowledge represents the first step in this direction.



\bibliographystyle{hplain}

\bibliography{BibFile}{}
\end{document}